\newcommand{\ulambda}{{\boldsymbol{\lambda}}}
\newcommand{\umu}{{\boldsymbol{\mu}}}
\newcommand{\unu}{{\boldsymbol{\nu}}}
\newcommand{\bs}{{\boldsymbol{s}}}
\newcommand{\uemptyset }{{\boldsymbol{\emptyset}}}
\newcommand{\Uglov}[1]{\Phi_{#1}}
\newcommand{\Usli}{\mathcal{U}_v (\mathfrak{sl}_{\infty})}
\newcommand{\Uslaff}{\mathcal{U}_v (\widehat{\mathfrak{sl}}_{e})}
\newtheorem{Th}{Theorem}[section]
\newtheorem{lemma}[Th]{Lemma}
\newtheorem{Prop}[Th]{Proposition}
\theoremstyle{remark}
\newtheorem{Rem}[Th]{Remark}{\rmfamily}
\theoremstyle{definition}
\newtheorem{Def}[Th]{Definition}{\rmfamily}
\newtheorem{Exa}[Th]{Example}{\rmfamily}
\newtheorem{abs}[Th]{\bfseries}
\begin{document}

\title{On the regularization process for Ariki-Koike algebras }
\author{N. Jacon}
\maketitle
\date{}

\begin{abstract}
The aim of this note is to study a generalization of theorems  by James and Fayers on the modular representations of the  symmetric group  and its Hecke algebra 
 to the case of the complex reflection groups of type $G(l,1,n)$ and the associated Ariki-Koike algebra.  
\end{abstract}

\section{Introduction}

One of the main and still open problem in the representation theory 
 of finite groups is the explicit determination of the irreducible representations for the symmetric groups $\mathfrak{S}_n$, for $n\in \mathbb{N}$,  over a field of  characteristic $p>0$. 
 The main informations on these representations are contained in a fundamental object: the decomposition matrix.  
  By the works of James, the problem of computing this matrix  may be attacked using the representation theory of Hecke algebras.  
   Indeed, James' conjecture 
   predicts that, in the case where $p^2>n$, the  decomposition matrix of the symmetric group corresponds to the decomposition matrix 
 of a non semisimple deformation of the group algebra $\mathbb{C}\mathfrak{S}_n$: the Hecke algebra. Both matrices have their 
  rows labelled by the set of partitions of rank $n$ (which itself labels the set of simple modules of $\mathbb{C}\mathfrak{S}_n$) where as their
   columns are labelled by a certain subset of partitions called the set of $p$-regular partitions (which itself labels the set of simple modules of 
     $\mathbb{F}\mathfrak{S}_n$ where $\textrm{car} (\mathbb{F})=p$). 
  
An algorithm for the computation of the decomposition matrices for Hecke algebras over $\mathbb{C}$ is available. This algorithm comes from a conjecture 
 given by Lascoux, Leclerc and Thibon \cite{LLT}   and proved by Ariki \cite{ariki}. It asserts that the decomposition matrix of the Hecke algebra  over $\mathbb{C}$ 
  is given by the evaluation at $v=1$ of the matrix of the canonical basis for the basic representation of the quantum algebra $\Uslaff$. 
 Recently, several authors have shown that the matrix of the 
 canonical basis itself (and not only its specialization at $v=1$) admits 
  an interpretation in terms of  ``graded representation theory'' of Hecke algebras (see \cite{Kl} and the references therein).  This matrix 
   can thus be called the $v$-decomposition matrix of the Hecke algebra. 
  
  A nice general property on these matrices has been revealed  by  James  \cite{Ja} (for the decomposition matrix of the symmetric group) 
   and Fayers \cite{F2}  (for the $v$-decomposition matrix of the Hecke algebra).   One can explicitly   associate to each 
    partition $\lambda$ of $n$ a certain $p$-regular partition, the {\it regularization} of $\lambda$,
       and computes the associated decomposition number.  This result  brings in a very important partial order
        on the set of partitions which appears in many ways in the representation theory of the symmetric group: the dominance order.
  
This note is concerned with a generalization of the Hecke algebra of $\mathfrak{S}_n$: the Ariki-Koike algebra. 
 We present an analogue of James and Fayers' results for the decomposition matrices of these algebras and for  the matrices of the canonical bases for the irreducible highest weight $\Usli$-modules.  The concept of partitions
  is here replaced by the concept of ``multipartitions'', the set of $p$-regular partitions by the set of so called  cylindric multipartitions and the 
   dominance order on partitions with the dominance order on multipartitions. 
 The main results, Theorem \ref{princ} and Theorem \ref{princ1} give the desired analogues of James and Fayers' Theorems. 
The paper will be organized as follows. In the first section, we introduce the main combinatorial objects we will use in this paper: multipartitions and symbols and 
 present some useful properties on them. Then we define the notion of regularization of multipartitions. The third part is devoted to a 
  brief exposition  of the representation theory of $\Usli$ using the theory of Fock spaces.  All these notions are then used in the two last parts to obtain our main results.

\section{Multipartitions and symbols}

In this part, we give the combinatorial definitions which are needed for presenting our main results. 

\begin{abs} Let $l\in \mathbb{N}_{>0}$. We denote 
$$ \mathcal{S}^l:=\left\{ (s_0,\ldots,s_{l-l})\in \mathbb{Z}^l,\ |\ \forall i\in \{0,\ldots, l-2\}, s_{i+1}\geq s_i\right\}.$$ 
  Recall that a {\it partition} $\lambda=(\lambda_1,\ldots,\lambda_r)$ of $n\in \mathbb{N}$
 is an ordered  sequence of weakly decreasing non negative integers such that $|\lambda|:=\sum_{1\leq i\leq r} \lambda_i=n$. The  integer $h_{\lambda}:=\textrm{max} (i\geq 0\ |\ \lambda_i\neq 0)$ is called the {\it height } 
  of $\lambda$ with the convention that $h_{\lambda}=0$ if and only if $\lambda$ is $\emptyset$, the empty partition. 
A {\it multipartition} or {\it $l$-partition} of $n$ is an $l$-tuple  of partitions 
${\ulambda }
=(\lambda ^{0},\ldots ,\lambda ^{l-1 })$  such that, for  each $i\in \{0,1,\ldots,l-1\}$, $\lambda^i$ is a partition of rank $n_i\in \mathbb{N}$ and $\sum_{0\leq i\leq l-1} n_i=n$.  If $\ulambda$ is a multipartition of  $n$, we denote ${\boldsymbol{\lambda}}\vdash_{l}n$.  The height of $\ulambda$ is the non negative integer:
$$h_{\ulambda}:=\textrm{max} (h_{\lambda^{0}},\ldots,h_{\lambda^{l-1}}) , $$ 
and we have $h_{\ulambda}=0$ if and only if $\ulambda$ is the empty multipartition, which is  denoted by $\uemptyset$.  
 The dominance order on multipartitions is defined as follows. Let $\ulambda:= (\lambda^0,\ldots,\lambda^{l-1})$ 
  and $\umu:= (\mu^0,\ldots,\mu^{l-1})$  be two partitions of $n$ then we denote:
  $$\ulambda \unrhd \umu \iff \forall c\in \{0,\ldots,l-1\},\  \forall k\in \mathbb{N},\ \sum_{0\leq i < c} |\lambda^i| +\sum_{1\leq j\leq k} \lambda^{c}_j 
   \geq  \sum_{0\leq i <c} |\mu^i|\ +\sum_{1\leq j\leq k} \mu^{c}_j ,$$
where the partitions are considered with an infinite number of empty parts. 

\end{abs}


  \begin{abs} Let ${\bf s}=(s_0,\ldots, s_{l-1})\in \mathcal{S}^l$. 
We now define the notion of  shifted symbol.
 Following \cite[\S 5.5.5]{GJ},  let $\beta=(\beta_1,\ldots ,\beta_k)$ be a
  sequence of 
    integers and let $t$ be a positive  integer.  
 We set
 $$\beta (s):=(0,1,\ldots ,t-1,\beta_1+t,\ldots ,\beta_k+t).$$
 It is a sequence of rational numbers with exactly $k+t$ elements. 
For   $i=0,1,\ldots,l-1$, let $h^{i}$ be the height of the
partitions $\lambda^{i}$.  We consider the following sequence of rational numbers:
    $$\beta^{i}=(\lambda^{i}_{h^{i}}-h^{i}+h^{i},\ldots ,\lambda^{i}_j-j+h^{i},   \ldots ,\lambda^{i}_1-1+h^{i}).$$
 This is a sequence of strictly increasing integers if and only if $\lambda^i$ is a partition.  
Now, for $i=0,1,\ldots ,l-1$, we put
 $$hc^i =h^{i}-s_{i}
\textrm{ and }
hc^{\ulambda}=\textrm{max}(hc^{0},\ldots ,hc^{l-1}).$$
   Let $h$ be an integer such that $h\geq hc^{\ulambda}+1$.  The  {\it shifted ${\bf s}$-symbol} of $\ulambda$ of size $h$ is  the family of sequences 
$$\mathfrak{B}_{({\bf m},h)} (\ulambda)=(\mathfrak{B}^{0},\ldots ,\mathfrak{B}^{l-1}),$$
such that for $j=1,\ldots ,l$
$$\mathfrak{B}^{j}=(\beta^{j}(h-hc^j )).$$
Each sequence $\mathfrak{B}^{j}$ contains exactly $h+s_j$ elements $(\mathfrak{B}^{j}_{h+s_j},\ldots ,\mathfrak{B}^{j}_1)$.

  \end{abs}
  
\begin{abs}\label{form}   A shifted symbol  is usually represented as (and identified with) an $l$-row tableau where the $c$-th row (starting from the bottom) is $\mathfrak{B}^{c}$ (see \cite[\S 5.5.5]{GJ}). It  is written as follows:
 $$
\mathfrak{B}_{({\bf s},h)}  (\ulambda)=\left(
\begin{array}
[c]{lllll}
\mathfrak{B}^{l-1}_{h+s_{l-1}} &   \ldots  &  \ldots   &   \mathfrak{B}^{l-1}_2  & \mathfrak{B}^{l-1}_1 \\
\mathfrak{B}^{l-2}_{h+s_{l-2}} &   \ldots  &  \ldots   &  \mathfrak{B}^{l-2}_1 &\\     
\ldots &   \ldots  &  \ldots   &  \\  
\mathfrak{B}^{0}_{h+s_{0}}  &   \ldots  &  \mathfrak{B}^{0}_{1}    &    &   \\  
\end{array}
\right). $$
 In particular,  the $i^{\textrm{th}}$ column (starting from the right) of $\mathfrak{B}_{({\bf s},h)} (\ulambda)$ 
  contains exactly $l-c(i)$ elements where $c(i)=\textrm{min} (k\in \{0,1,\ldots,l-1\}\ |\ i+s_{k}-s_{l-1}>0)$ and this column is given as follows:
  $$
\left(
\begin{array}
[c]{c}
\mathfrak{B}^{l-1}_{i} \\
\mathfrak{B}^{l-2}_{i+s_{l-2}-s_{l-1}} \\  
\ldots \\
\mathfrak{B}^{c(i)}_{i+s_{c(i)}-s_{l-1}}  \\
\end{array}
\right).$$

 It is easy to recover the multipartition ${\boldsymbol{\lambda}}$  from the datum of an arbitrary shifted symbol. 
 Similarly, 
one can also easily recover  ${\bf s}\in \mathbb{Z}^l$  modulo a translation by an element $(x,\ldots,x)\in \mathbb{Z}^l$.

  \end{abs}

\begin{Exa}\label{e1}
With ${\boldsymbol{\lambda}}=(3,2.2.2,2.1)$ and $\mathbf{s}=(0,0,2)$, if we take $h=5$ we
obtain
\[
\mathfrak{B}_{((0,0,2),5)} (\ulambda)=\left(
\begin{array}
[c]{rrrrrrr}%
 0 & 1 & 2 & 3 & 4 & 6 & 8\\
0  & 1 & 4 & 5 & 6 &   \\
 0 & 1 & 2 & 3 & 7 & 
\end{array}
\right).
\]
With ${\bf s}=(0,1,2,3)$ and ${\boldsymbol{\lambda}}=(3.1,1.1,2.1.1,3)$ and $h=3$, the shifted symbol is as follows:
\[
\mathfrak{B}_{((0,1,2,3),3)} (\ulambda)=\left(
\begin{array}
[c]{rrrrrrr}%
 0 & 1 & 2 & 3 & 4 & 8 \\
0  & 1 & 3 & 4 & 6    \\
 0 & 1 & 3 & 4 &&\\
 0 & 2 & 5 &  && 
\end{array}
\right).
\]

\end{Exa}

\section{Regularization of multipartitions}
In this section, after having fixed an element in  $\mathcal{S}^l$,   we associate to each multipartition another one which belongs to a certain class of multipartitions: 
 the cylindric multipartitions. 
 
\begin{Def}\label{standard}  Assume that ${\bf s}\in \mathcal{S}^l$.
  A shifted symbol  $\mathfrak{B}_{({\bf s},h)}(\ulambda)$ 
 is called {\it standard} if and only in  each column of $\mathfrak{B}_{({\bf s},h)} (\ulambda)$, the 
 numbers weakly decrease from top to bottom. 
\end{Def}
\begin{Exa}
 The first symbol in Example \ref{e1} is not standard where as the second is. Let now $\mathbf{s}=(0,1,2)$ and 
   ${\boldsymbol{\lambda}}=(3.1,2.2.1,2.1)$.  If we take $h=5$ we
obtain
\[
\mathfrak{B}_{((0,1,2),5)} (\ulambda)=\left(
\begin{array}
[c]{rrrrrrr}%
 0 & 1 & 2 & 3 & 4 & 6 & 8\\
0  & 1 & 2 & 4 & 6 & 7  \\
 0 & 1 & 2 & 4 & 7 & 
\end{array}
\right)
\]
This symbol is standard. 
\end{Exa}

\begin{Def}\label{FLOTW}  Assume that  $\bs=(s_0,\ldots,s_{l-1}) \in \mathcal{S}^l$ then the multipartition ${\ulambda }%
=(\lambda ^{0},\ldots ,\lambda ^{l-1})$  is called  {\it cylindric} if  for every $c=0,\ldots ,l
-2$ and  $i\geq1$,  we have $\lambda _{i}^{c}\geq \lambda _{i+s_{c+1}-s_{c}}^{c+1}$  (the partitions are taken with an infinite number of empty parts). 
We denote by  $\Uglov{{\bf s}}$ the set of  {\it cylindric multipartitions} associated to ${\bs}\in \mathcal{S}^l$ and by 
$\Uglov{\bf s} (n)$ the set of cylindric  multipartitions of rank $n$. 
\end{Def}

\begin{Prop}\label{cy}
Let ${\bf s}\in \mathcal{S}^l$,  $\ulambda\vdash_l n $ and let  $\mathfrak{B}_{({\bf s},h)}  (\ulambda)$ be an associated shifted symbol. 
Then $\ulambda$  is cylindric if and only if  $\mathfrak{B}_{({\bf s},h)}  (\ulambda)$ is standard. 
\end{Prop}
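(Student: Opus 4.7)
The plan is to reduce the equivalence to a direct comparison of entries via a uniform closed form for the symbol. Unwinding the construction of $\mathfrak{B}^c_k$, the padding block $0,1,\ldots,t-1$ at the start of $\beta^c(t)$, with $t = h + s_c - h^c$, occupies exactly the leftmost $t$ positions (largest indices from the right), while the ``partition part'' occupies the remaining $h^c$ positions on the right. Adopting the convention $\lambda^c_k = 0$ for $k > h^c$, both regimes collapse into the single identity
$$\mathfrak{B}^c_k \;=\; \lambda^c_k + h + s_c - k, \qquad 1 \leq k \leq h + s_c.$$
This uniform formula is the only computational input needed; in the padding region $k > h^c$ it returns $h + s_c - k$, which indeed runs through $0, 1, \ldots, t-1$ as $k$ decreases from $h+s_c$ to $h^c + 1$.

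Next, I translate standardness into an inequality. Reading the $i$-th column from the right top-to-bottom produces the entries $\mathfrak{B}^c_{i+s_c-s_{l-1}}$ as $c$ decreases from $l-1$ to $c(i)$, and standardness of the column amounts to
$$\mathfrak{B}^{c+1}_{i+s_{c+1}-s_{l-1}} \;\leq\; \mathfrak{B}^c_{i+s_c-s_{l-1}}$$
for each $c \in \{0,\ldots,l-2\}$ and each $i$ such that both rows $c$ and $c+1$ have entries in the column.

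The key calculation is then immediate. Setting $j = i + s_c - s_{l-1}$, so that $i + s_{c+1} - s_{l-1} = j + (s_{c+1} - s_c)$, substitution of the closed form makes every occurrence of $h$, $s_c$, $s_{c+1}$ cancel and collapses the inequality to $\lambda^{c+1}_{j + s_{c+1} - s_c} \leq \lambda^c_j$, which is exactly the cylindric condition at $(c,j)$. This pleasant cancellation is the content of the proposition.

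It remains to match ranges. As $i$ varies with both rows $c, c+1$ present in column $i$, the parameter $j$ sweeps over $\{1, 2, \ldots, h + s_c\}$, and for $j$ beyond this range both $\lambda^c_j$ and $\lambda^{c+1}_{j + s_{c+1} - s_c}$ vanish since $h + s_c \geq h^c$ (which follows from the standing hypothesis $h \geq hc^{\ulambda} + 1$). The cylindric inequality is therefore trivially satisfied outside the symbol's range, completing the equivalence. I do not foresee a serious obstacle: the whole argument is unpacking definitions, and the only care needed is the bookkeeping of padding entries in the uniform formula together with the range verification.
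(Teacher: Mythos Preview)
Your argument is correct and is essentially the paper's own proof, carried out with more care: both reduce to the identity $\mathfrak{B}^c_k=\lambda^c_k+h+s_c-k$ and observe that the shifts cancel when comparing same-column entries of adjacent rows, so the standardness inequality becomes $\lambda^c_j\geq\lambda^{c+1}_{j+s_{c+1}-s_c}$. The paper compresses this into two displayed equivalences, whereas you additionally spell out the uniform closed form across the padding region and verify that the index range $1\le j\le h+s_c$ exhausts all nontrivial instances of the cylindric condition; this extra bookkeeping is sound and does not alter the approach.
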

\begin{proof} 
For all relevant $i\in \mathbb{N}$ and $c\in \{0,1,\ldots,l-2\}$, we have:
$$\lambda^{c}_i -i +s_c \geq \lambda^{c+1}_{i+s_{c+1}-s_c}-(i+s_{c+1}-s_c)+s_{c+1} \iff \lambda^c_i \geq \lambda^{c+1}_{i+s_{c+1}-s_c},$$
whence 
$$\mathfrak{B}^{c}_{i}\geq \mathfrak{B}^{c+1}_{i+s_{c+1}-s_c} \iff \lambda^c_i \geq \lambda^{c+1}_{i+s_{c+1}-s_c},$$
which is exactly what is needed to prove the assertion.
\end{proof}

\begin{abs}\label{reg} We now explain the process of regularization for multipartitions.  Let $\ulambda\vdash_l n$ 
and let $\bs=(s_0,\ldots,s_{l-1}) \in \mathcal{S}^l$. Let $\mathfrak{B}_{({\bf s},h)}  (\ulambda)$ be a shifted symbol associated to 
 $\ulambda$.  Then for each column of the symbol, we reorder the elements so that it is weakly decreasing from top to bottom.
  For example, the symbol in Example \ref{e1}
  $$\mathfrak{B}_{((0,1,1),4)} (\ulambda)=\left(
\begin{array}
[c]{rrrr}%
 0 & 1 & 3 & 9 \\
0  & 4 & 5 & 7  \\
 0 & 2& 3
\end{array}
\right)$$
 becomes: 
   $$\left(
\begin{array}
[c]{rrrr}%
 0 & 1 & 3 & 9 \\
0  & 2& 3 & 7  \\
 0 & 4& 5
\end{array}
\right).$$
  We claim that the resulting set is   a well defined standard symbol of a multipartition $\mathfrak{B}_{({\bf s},h)}  (\umu)=( \mathfrak{B}^0,\ldots,\mathfrak{B}^{l-1}   )$. Indeed, denote 
   this new set by $\mathcal{B}=(\mathcal{B}^0,\ldots,\mathcal{B}^{l-1})$ and assume that there exist 
   $c\in \{0,1,\ldots,  l-1\}$ and $j\in \mathbb{N}$ such that 
   $$\mathcal{B}^c_{j+1}\geq \mathcal{B}^{c}_{j}.$$
   Let $\mathcal{A}_1$ be the multiset of elements in the column of $\mathcal{B}^{c}_{j}$ which are less or equal  than $\mathcal{B}^{c}_{j}$ in $\mathfrak{B}_{({\bf s},h)}  (\ulambda)$. 
    Let $\mathcal{A}_2$ be the multiset of elements in the column of $\mathcal{B}^{c}_{j+1}$ which are greater or equal than $\mathcal{B}^{c}_{j}$ in $\mathfrak{B}_{({\bf s},h)}  (\ulambda)$.  Assume that  the column containing $\mathcal{B}^{c}_{j+1}$ contains $m$ elements.  We know that in the columns of 
     $\mathcal{B}$, the numbers are weakly decreasing from top to bottom. 
     Thus,  by the  construction of $\mathcal{B}$ and the above assumption, 
         we have $\sharp \mathcal{A}_1 \geq l-c$ and $\sharp \mathcal{A}_2 \geq m-l+c+1$. Now $\mathfrak{B}_{({\bf s},h)}  (\ulambda)$
  is a well defined symbol   so each row contains elements which are strictly increasing from left to right.  
  This implies that 
   the rows containing the elements of $\mathcal{A}_1$  in the column of    $\mathfrak{B}_j^c$ and the rows containing the elements of 
  $\mathcal{A}_2$  in the column of $\mathfrak{B}_{j+1}^c$ in $\mathfrak{B}_{({\bf s},h)}  (\ulambda)$ must be disjoint. 
      So the sum 
 $\sharp \mathcal{A}_1+\sharp \mathcal{A}_2$ must be less or equal than $m$. This is not the case, so the result follows. 
 
 \end{abs}

\begin{abs} Let   $\ulambda\vdash_l n$  and let $\mathfrak{B}_{({\bf s},h)}  (\ulambda)=( \mathfrak{B}^0,\ldots,\mathfrak{B}^{l-1}   )$ be an associated shifted symbol. 
For all $c_1\in \{0,1,\ldots,l-1\}$ and $j_1\in \{ 1,2,\ldots, h+s_{c_1}\}$, we set 
$$R(\ulambda)_{(j_1,c_1)}=\sharp \left\{ c\in \{0,1,\ldots,l-1\}\ |\ c> c_1,\ \mathfrak{B}_{j_1}^{c_1}<\mathfrak{B}^c_{j_1+s_{c}-s_{c_1}},\ \mathfrak{B}^{c_1}_{j_1}\notin \mathfrak{B}^c \right\},$$ 
and 
$$R(\ulambda)=\sum_{0\leq c \leq l-1} \sum_{1 \leq j_1 \leq h+s_c}  R(\ulambda)_{(j_1,c_1)}.$$
Clearly, this number does not depend of the choice of $h$ (and thus on the choice of the shifted symbol). 
\end{abs}
\begin{Def}
Let $\ulambda$ be a multipartition 
and let $\bs=(s_0,\ldots,s_{l-1}) \in \mathcal{S}^l$. Let $\mathfrak{B}_{{\bf s},h} (\ulambda)$ be the associated shifted symbol.
 There exists a cylindric multipartition $\ulambda^R$ such that  $\mathfrak{B}_{{\bf s},h} (\ulambda^R)= \mathfrak{B}_{{\bf s},h} (\ulambda)^R$. 
This multipartition is called the {\it regularization} of $\ulambda$.
\end{Def}

By construction, the regularization of a multipartition  is a cylindric multipartition by Proposition \ref{cy}. It is also clear that the regularization of a cylindric multipartition
 is itself. 

\begin{Exa}
Consider the $3$-partition $\ulambda:=(\emptyset,\emptyset,6.2)$. We set ${\bf s}=(0,1,1)$, $h=3$. Then we have:
\[
\mathfrak{B}_{((0,1,1),3)} (\ulambda)=\left(
\begin{array}
[c]{rrrr}%
 0 & 1 & 4 & 9  \\
0  & 1 & 2 & 3    \\
 0 & 1 & 2 
\end{array}
\right).
\]
 We obtain 
\[
\mathfrak{B}_{((0,1,1),3)} (\ulambda)=\left(
\begin{array}
[c]{rrrr}%
 0 & 1 & 2 & 3  \\
0  & 1 & 2 & 9    \\
 0 & 1 & 4 
\end{array}
\right).
\]
We have:
$$R(\ulambda)_{(j,c)}=\left\{
\begin{array}{lr}
0 & \textrm{ if }(j,c)\notin\{(1,0),(2,1),(1,1)\}\\
1 & \textrm{ if }(j,c)\in\{(1,0),(2,1),(1,1)\}\\
\end{array}\right.$$
and so we have  $R(\ulambda)=3$ . We can check that  $\ulambda^R=(2,6,\emptyset)$. 
\end{Exa}
\begin{Exa}
Consider the $3$-partition $\ulambda:=(5,\emptyset,2.1)$. We set ${\bf s}=(0,1,1)$, $h=3$. Then we have:
\[
\mathfrak{B}_{((0,1,1),3)} (\ulambda)=\left(
\begin{array}
[c]{rrrr}%
 0 & 1 & 3 & 5  \\
0  & 1 & 2 & 3    \\
 0 & 1 & 7
\end{array}
\right).
\]
We obtain 
\[
\mathfrak{B}_{((0,1,1),3)} (\ulambda)=\left(
\begin{array}
[c]{rrrr}%
 0 & 1 & 2 & 3  \\
0  & 1 & 3 & 5    \\
 0 & 1 & 7 
\end{array}
\right).
\]
and we have  $R(\ulambda)=R (\ulambda)_{(2,1)}=1$. Note that this symbol is standard and we have $\ulambda^R=(5,2.1,\emptyset)$.

\end{Exa}

%
%
\section{Action of $\Usli$ on the Fock space}

\begin{abs}
 Let $v$ be an indeterminate and let $\Usli$ be the enveloping algebra of $\mathfrak{sl}_{\infty}$ with Chevalley generators $e_j$, $f_j$ and $t_j$ ($j\in \mathbb{N}$), see for example \cite[\S 6.1]{GJ}.
   The simple roots and fundamental weights are denoted by $\alpha_k$ and $\Lambda_k$  for $k\in \mathbb{N}$ respectivly. Let now ${\bf s} \in \mathcal{S}^l$ and let 
    $\mathcal{F}^{\bf s}$ be the associated Fock space. This  is the $\mathbb{Q}(v)$-vector space defined as
follows:
\[
\mathcal{F}_{\mathbf{s}}=\bigoplus_{n\in\mathbb{Z}_{\geq0}}\bigoplus
_{{\boldsymbol{\lambda}}\vdash_{l}n}\mathbb{Q}(v){\boldsymbol{\lambda}}.
\]
One can define an action of $\Usli$ which turns     $\mathcal{F}_{\bf s}$  into an integrable  $\Usli$-module.
\end{abs}
\begin{abs}\label{f1}
Let   $\beta:=(\beta_1,\ldots,\beta_m)$  be a sequence of strictly increasing positive numbers.
 We write  $j\in \beta$ if the number $j$ appears in the sequence $\beta$ and, 
and   $j\notin \beta$ otherwise. In this case,  we write   $\beta \cup \{j\}$ for 
  the sequence of strictly positive numbers obtained by inserting the number $j$ in $\beta$.  Similarly, if 
 $j\in \beta$ then     $\beta \setminus \{j\}$ is defined to be the sequence obtained from $\beta$ by removing $j$. 

Let $\ulambda\vdash_l n$ and let 
$\mathfrak{B}_{({\bf s},h)} (\ulambda)$ be an associated shifted symbol. Let 
$\umu\vdash_l n$ and let 
$\mathfrak{B}_{({\bf s},h)} (\umu)$ be an associated shifted symbol (so we assume that $h\geq \textrm{max}(hc^{\ulambda},hc^{\umu} )+1$.) 
Then we write 
$$\ulambda \overset{j}{\underset {c}{\longrightarrow}} \umu $$
if for all $d\in \{0,1,\ldots,l-1\}$, we have 
$$\mathfrak{B}_{{\bf s},h}(\umu)^d=\left\{ \begin{array}{ll}   
 \mathfrak{B}_{{\bf s},h}(\ulambda)^d & \textrm{ if $d\neq c$}\\
(\mathfrak{B}_{{\bf s},h} (\ulambda)^d \setminus \{j+h-1\}))\cup{\{j+h\}} & \textrm{ if $d=c$}
\end{array}
   \right.$$
 Let $k\in \mathbb{N}$,   we  write:
   $$\ulambda \overset{j: k}{\underset {(c_1,\ldots,c_k)}{\longrightarrow}} \umu,$$
   if there exists a sequence of multipartitions
   $$\ulambda:=\ulambda[1],\ldots, \ulambda[k],\ulambda[k+1]:=\umu,$$
   such that for all $i=1,\ldots,k$, we have 
   $$\ulambda[k] \overset{j}{\underset {c_k}{\longrightarrow}} \ulambda[k+1].$$
   Hence, we have:
   $$\mathfrak{B}_{{\bf s},h}(\umu)^d=\left\{ \begin{array}{ll}   
 \mathfrak{B}_{{\bf s},h}(\ulambda)^d & \textrm{ if $d\neq c_i$ for all $i=1,\ldots,k$}\\
(\mathfrak{B}_{{\bf s},h} (\ulambda)^d \setminus \{j+h-1\}))\cup{\{j+h\}} & \textrm{ if $d=c_i$ for  $i=1,\ldots,k$ }
\end{array}
   \right.$$
   We also write $\ulambda \overset{j: k}{\longrightarrow} \umu $ if there exists  a sequence ${(c_1,\ldots,c_k)}\in \{0,1,\ldots,l-1\}^k$ such that 
   $\ulambda \overset{j: k}{\underset {(c_1,\ldots,c_k)}{\longrightarrow}} \umu $. 
   Finally, given $\ulambda$ and $\umu$ such that $\ulambda \overset{j: k}{\underset {(c_1,\ldots,c_k)}{\longrightarrow}} \umu $,  we define the following number:
   $$N_j (\ulambda,\umu)=\sum_{1\leq i\leq k} \left(\begin{array}{l} 
   \{\textrm{number of integers equals to $j+h-1$ in $\mathfrak{B}_{({\bf s},h)} (\umu)^c$ with $c\leq c_i$}\}   \\
   -  \{\textrm{number of integers equals to $j+h$ in $\mathfrak{B}_{({\bf s},h)} (\ulambda)^c$ with $c\leq c_i$}\}
   \end{array}\right)$$
  \end{abs}
  \begin{Exa}
  Consider the $4$-partition $\ulambda=(3.1,\emptyset,\emptyset,6.2)$ and ${\bf s}=(0,0,1,1)$. For $h=3$, we get the symbol:
  \[
\mathfrak{B}_{((0,0,1,1),3)} (\ulambda)=\left(
\begin{array}
[c]{rrrr}%
 0 & 1 & 4& 9  \\
0  & 1 & 2 & 3    \\
 0 & 1 & 2 \\
 0 & 2 & 5 \\ 
\end{array}
\right).
\]
  We have 
  $$(3.1,\emptyset,\emptyset,6.2)   \overset{3: 2}{\underset {(0,1)}{\longrightarrow}}  ( 3.2,1,\emptyset,6.2) ,$$
  where the symbol of $\umu$ is:
    \[
\mathfrak{B}_{((0,0,1,1),3)} (\umu)=\left(
\begin{array}
[c]{rrrr}%
 0 & 1 & 4& 9  \\
0  & 1 & 2 & 3    \\
 0 & 1 & 3 \\
 0 & 3 & 5 \\ 
\end{array}
\right).
\]
  \end{Exa}

  \begin{abs}   
For our purpose, we only need to describe the action of the Chevalley generators $f_i$ for $i\in \mathbb{Z}$ 
  and their divided power    $f^{(r)}_j:=\displaystyle\frac{f^r_j}{[r]_v !} $, 
 for $r\in \mathbb{N}$. This is given as follows. Let $\ulambda\vdash_l n$, let $r\in \mathbb{N}$ and let $j\in \mathbb{Z}$, then we have 
 $$f_j^{(r)} .\ulambda =\sum_{\ulambda \overset{j: r}{\longrightarrow}\umu } q^{N_j (\ulambda,\umu)} \umu .$$
%
%
It is known that the $\Usli$-submodule $V({\bf s})$ of $\mathcal{F}^{\bf s}$ generated by the empty multipartition is 
 an irreducible highest weight module for $\Usli$.

\end{abs}

\begin{abs}
Let $\Usli^-$ be the subalgebra of $\Usli$ generated by the $f_i$'s. We have a ring automorphism $x\mapsto \overline{x}$ of  $\Usli^-$ 
 such that 
 $$\overline{f}_j=f_j\ (j\in \mathbb{N}),\qquad\textrm{ and } \qquad \overline{v}=v^{-1},$$
which induces a $\mathbb{C}$-linear map $v\mapsto \overline{v}$ on $V({\bf s})$ defined by:
$$\overline{v.\emptyset}=\overline{v}.\emptyset.$$
Using this, one can define the canonical basis elements of $V({\bf s})$. They are elements 
 of $V({\bf s})$ which are parametrized by the set of cylindric multipartitions 
 $$\{ b_{\ulambda}\ |\ \ulambda\in \Uglov{{\bf s}}\},$$
 and characterized by the following property:
 $$\forall \ulambda\in \Uglov{\bf s} ,\ \overline{b_{\ulambda}}= b_{\ulambda},\ b_{\ulambda}=\ulambda + \sum_{\umu\vdash_l n} d_{\umu,\ulambda} (v) \umu,$$
 for elements $d_{\umu,\ulambda} (v)\in v\mathbb{Q}[v]$ with $\umu\vdash_l n$ such that  $\ulambda\neq \umu$. 
\end{abs}

\begin{abs}

 We now briefly explain an algorithm for computing the canonical bases of irreducible highest weight $\Usli$-modules. 
 An analogue    of this algorithm has already been described in \cite{Algo} (and the proofs can be found therein) 
  in a more general setting but here it can be  simplified here. 
 Let $\ulambda=(\lambda^0,\ldots,\lambda^{l-1})\vdash_l n $ be a  non empty cylindric 
  multipartition  and let $\mathfrak{B}_{({\bf s},h)}:=\mathfrak{B}_{({\bf s},h)} (\ulambda)$ be an associated shifted symbol.  Denote
  $ \mathfrak{B}_{({\bf s},h)}=(\mathfrak{B}^0,\ldots,\mathfrak{B}^{l-1})$. 
   From this datum, we define a new multipartition $\ulambda^-\vdash_l n' <n$  together with two integers 
    $j(\ulambda)$ and $r(\ulambda)$. 
    
    \begin{enumerate}
    \item Let $c (\ulambda) $ be the minimal integer such that $\lambda^{c (\ulambda)}$ is non empty. This means that 
     there exists $i (\ulambda) \in \mathbb{N}$  such that 
     $$\mathfrak{B}^{c (\ulambda) }_{i (\ulambda)}>\mathfrak{B}^{c (\ulambda)}_{i (\ulambda)+1}+1.$$
     Assume that $i (\ulambda)$ is minimal with this property and set 
      $$j (\ulambda):=\mathfrak{B}^{c (\ulambda)}_{i (\ulambda)}.$$
    \item Let $r(\ulambda)\in \mathbb{N}$ be maximal  such that 
      $$\mathfrak{B}^{c(\ulambda)}_{i(\ulambda)}=\mathfrak{B}^{c(\ulambda)+1}_{i(\ulambda)+s_{c(\ulambda)+1}-s_{c(\ulambda)}}= \ldots = \mathfrak{B}^{c(\ulambda)+r(\ulambda)-1}_{i(\ulambda)+s_{c(\ulambda)+r(\ulambda)-1}-s_{c(\ulambda)}}=j (\ulambda),$$    
    then we have $r(\ulambda)\geq 1$ and because $\ulambda$ is cylindric, by Proposition \ref{cy}, we deduce that:
    $$        j(\ulambda)>  \mathfrak{B}^{c(\ulambda)+k-1}_{i(\ulambda)+s_{c(\ulambda)+k-1}-s_{c(\ulambda)}+1}+1$$
    for all $k=1,\ldots, r (\ulambda)$.
    \item  Take the symbol obtained by replacing in  $ \mathfrak{B}_{({\bf s},h)}$  all the elements 
  $$\mathfrak{B}^{c(\ulambda)}_{i(\ulambda)}=\mathfrak{B}^{c(\ulambda)+1}_{i(\ulambda)+s_{c(\ulambda)+1}-s_{c(\ulambda)}}= \ldots = \mathfrak{B}^{c(\ulambda)+r(\ulambda)-1}_{i(\ulambda)+s_{c(\ulambda)+r(\ulambda)-1}-s_{c(\ulambda)}}=j (\ulambda)$$    
  by $j (\ulambda)-1$. This is a well defined shifted symbol $\mathfrak{B}_{({\bf s},h)}'$. Thus there exists a unique $l$-partition $\ulambda^-\vdash_l n-r(\ulambda)$ 
  such that  $\mathfrak{B}_{({\bf s},h)} (\ulambda^-)=   \mathfrak{B}_{({\bf s},h)}'$. 
    \item By construction,  $\mathfrak{B}_{({\bf s},h)}'$  is standard so $\ulambda^-$ is cylindric.
    \end{enumerate}

Using this, we can  produce a sequence of  cylindric multipartitions $\ulambda[{k}]$ with $k=1,\ldots, m+1\in \mathbb{N}$ such that $\ulambda[1]=\ulambda$, 
 $\ulambda[m+1]=\uemptyset$ and $\ulambda[k+1]=\ulambda[k]^-$ for $k=2,\ldots,m$. 
  Set $j_k:=j(\ulambda [k])-h$ and $a_k:=r(\ulambda [k])$  for $k=1,\ldots,m$. Then note that we have:
 $$\ulambda[m+1] \overset{j_m: a_m }{\longrightarrow}  \ulambda[m] \overset{j_{m-1}: a_{m-1} }{\longrightarrow} \ldots 
 \overset{j_1: a_1 }{\longrightarrow}  \ulambda[1] .$$
We have  thus  defined two sequences of  integers $({j}_m,\ldots,{j}_1)$ and $(a_m,\ldots,a_1)$. 
Then   we can define 
$$a_\ulambda:= f^{(a_1)}_{j_1}  f^{(a_2)}_{j_2} \ldots f^{(a_m)}_{j_m}.\uemptyset . $$ 
  By induction, our construction implies that  we have:
$$a_\ulambda=\ulambda+\sum_{\ulambda\rhd \umu } b_{\umu,\ulambda} (v) \umu.$$
 (the  proof is exactly the same as   in \cite[Prop 4.6]{J}.) Note  that if  $b_{\umu,\ulambda} (v) \neq  0$ then the multiset of elements appearing in $\mathfrak{B}_{{\bf s},h} (\umu)$ is the same as the one of $\mathfrak{B}_{{\bf s},h} (\ulambda)$ (for $h$ large enough).  
When $\ulambda$ runs the set of all cylindric multipartitions, these elements provide  a basis for $V(\bs)$ and an algorithm for the computation of the canonical basis (see below or \cite[Ch. 6]{GJ}). 
  This, in turn, implies that  if  $d_{\umu,\ulambda} (v) \neq 0$ then the multiset of elements appearing in $\mathfrak{B}_{{\bf s},h} (\umu)$ is the same as the one of $\mathfrak{B}_{{\bf s},h} (\ulambda)$   (for $h$ large enough) and we have:
  $$b_{\ulambda}=\ulambda+\sum_{\ulambda\rhd \umu } d_{\umu,\ulambda} (v) \umu.$$
\end{abs}
\begin{Exa}
Let ${\bf s}=(0,0,1)$ and $\ulambda=(2.2,2.2,2.2.1)$, for $h=3$, we get the following symbol:
\[
\mathfrak{B}_{((0,0,1),3)} (\ulambda)=\left(
\begin{array}
[c]{rrrr}%
 0 & 2& 4 & 5  \\
0  & 3 & 4 &     \\
 0 & 3 & 4
\end{array}
\right)
\]
which is standard. We begin with $c (\ulambda)=0$ and $j (\ulambda)=4$. Then we have 
  $r(\ulambda)=2$. The new symbol is 
  \[
\left(
\begin{array}
[c]{rrrr}%
 0 & 2& 4 & 5  \\
0  & 2 & 4 &     \\
 0 & 2 & 4
\end{array}
\right)
\]
which is the symbol $\mathfrak{B}_{((0,0,1),3)} (\ulambda[2])$ with $\ulambda [2]=(2.1,2.1,2.2.1)$.   Then we have 
$c (\ulambda [2] )=0$,
$j(\ulambda [2])=4$, 
 $r(\ulambda[2])=3$ and we get the symbol:
  \[
\left(
\begin{array}
[c]{rrrr}%
 0 & 2& 3 & 5  \\
0  & 2 & 3 &     \\
 0 & 2 & 3
\end{array}
\right)
\]
which is the symbol $\mathfrak{B}_{((0,0,1),3)} (\ulambda[3])$ with $\ulambda [3]=(1.1,1.1,2.1.1)$. Continuing in this way, keeping the notation of the above remark,  we obtain 
$$\ulambda [4]=(1,1,2.1),\qquad  \ulambda [5]=(\emptyset,\emptyset,2),\qquad \ulambda [6]=(\emptyset,\emptyset,1),\qquad\ulambda [7]=\uemptyset.$$

\end{Exa}
\begin{abs}\label{llt}
Following \cite[\S 6.25]{mat}, the above construction provides an algorithm for the computation of the canonical basis of $V(\bs)$: The (modified) LLT algorithm. This is done recursively as follows. Let $n\in \mathbb{N}_{>0}$. For all $k<n$, assume that 
 we have constructed all the canonical basis elements 
 $$\{b_{\ulambda}\ |\ \ulambda\in \Uglov{\bs}(k)\}.$$
 Now, let $\ulambda\in  \Uglov{\bs}(n)$. We want to compute $b_{\ulambda}$. 
 \begin{enumerate}
 \item We set $c_{\ulambda}=f_{ j(\ulambda)-h}^{(r(\ulambda))} b_{\ulambda^-}$ and we have $\overline{c_{\ulambda}}=c_{\ulambda}$.
 \item The element $b_{\ulambda^-}$ is known by induction. Again, by the above construction,   we have 
 $$c_{\ulambda}=\ulambda+\sum_{\ulambda \rhd \unu} \widehat{d}_{\unu,\ulambda} (v) \unu,$$
 for Laurent polynomials $ \widehat{d}_{\unu,\ulambda} (v)$ with $ \widehat{d}_{\ulambda,\ulambda} (v)=1$ and we obtain 
 $$b_{\ulambda}=c_{\ulambda}-\sum_{\ulambda\rhd \unu } \alpha_{\unu,\ulambda} (v) b_\unu,$$
 for some Laurent polynomials  $ \alpha_{\unu,\ulambda} (v)$ such that  $\alpha_{\unu,\ulambda} (v)= \alpha_{\unu,\ulambda} (v^{-1}).$ 
 \item We find the greatest multipartition $\unu\neq \ulambda$ with respect to $\rhd$ 
 such that $\widehat{d}_{\unu,\ulambda} (v) \neq 0$. If no such multipartitions exist, then we have $c_{\ulambda}=b_{\ulambda}$.  
 Otherwise, $\alpha_{\unu,\ulambda} (v)$ is the unique bar invariant Laurent polynomial such that  the coefficients 
  of $\alpha_{\unu,\ulambda} (v)$ and $\widehat{d}_{\unu,\ulambda} (v)$ associated to the $v^i$ with $i\leq 0$ are the same. We then replace  $c_{\ulambda}$ 
    with the bar invariant element $c_{\ulambda}-\alpha_{\unu,\ulambda} (v) b_{\unu}$ and we  repeat the last  step until   all of the coefficients  $\widehat{d}_{\unu,\ulambda} (v)$ belong to $v\mathbb{Z}[v]$  for $\ulambda\neq \unu$. 
 
  \end{enumerate}

\end{abs}

\section{The main results}
The first main result of this paper is the following theorem concerning the computation of the coefficients of the canonical basis elements for irreducible highest weight $\Usli$-modules.. 
\begin{Th}\label{princ}
Assume that $\ulambda$ and $\umu$ are $l$-partitions of rank $n$ and assume that $\umu$ is cylindric. Then we have 
 $d_{{\ulambda},\umu} (v)=0$ unless $\umu\unrhd \ulambda^R$ 
 while $d_{{\ulambda},\ulambda^R} (v)=v^{R(\ulambda)}$.
\end{Th}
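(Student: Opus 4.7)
The strategy is to prove both assertions by simultaneous induction on the rank $n$, using the modified LLT algorithm of paragraph~\ref{llt}, which writes $b_\umu = c_\umu - \sum_{\unu \lhd \umu \text{ cyl.}} \alpha_{\unu,\umu}(v)\, b_\unu$ with $c_\umu = f^{(a_1)}_{j_1}\, b_{\umu^-}$ and $\umu^-$ of strictly smaller rank. For the first assertion, I would run in addition an inner decreasing induction on the cylindric $\umu$ in the dominance order on multipartitions of rank $n$. The base case $n=0$ is vacuous.

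For the first assertion, I would expand $c_\umu$ to obtain that the coefficient of $\ulambda$ in $c_\umu$ is $\sum d_{\ulambda',\umu^-}(v)\, v^{N_{j_1}(\ulambda',\ulambda)}$, summed over all $\ulambda'$ mapped to $\ulambda$ by the specified $f^{(a_1)}_{j_1}$ move. The outer induction forces nonzero summands to satisfy $\umu^- \unrhd (\ulambda')^R$. The main combinatorial claim is then: for the specific $(j_1,a_1)$ produced by the algorithm on $\umu$ --- where $\umu^- \to \umu$ places the new entries at the topmost available positions in each affected column, as required by the cylindricity of $\umu$ via Proposition~\ref{cy} --- the joint conditions $\umu^- \unrhd (\ulambda')^R$ and $\ulambda' \to \ulambda$ imply $\umu \unrhd \ulambda^R$. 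This is verified directly on the shifted symbols, using that regularization is column-by-column and that any non-topmost placement strictly decreases dominance. The correction terms $\alpha_{\unu,\umu}(v)\, b_\unu$ are dealt with by the inner induction plus transitivity: any $\ulambda$ appearing in $b_\unu$ for cylindric $\unu \lhd \umu$ satisfies $\unu \unrhd \ulambda^R$, hence $\umu \unrhd \ulambda^R$.

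For the second assertion, take $\umu = \ulambda^R$. The first assertion applied to every cylindric $\unu \lhd \ulambda^R$ gives $d_{\ulambda,\unu}(v) = 0$, so $d_{\ulambda,\ulambda^R}(v)$ equals the coefficient of $\ulambda$ in $c_{\ulambda^R}$. I would then show that exactly one $\ulambda'$ contributes, namely the one obtained from $\ulambda$ by ``undoing'' the last step at the topmost occurrences of $j(\ulambda^R)$ in the columns of $\mathfrak{B}_{(\bs,h)}(\ulambda)$ targeted by the LLT reduction of $\ulambda^R$, so that $(\ulambda')^R = (\ulambda^R)^-$. By the outer induction, $d_{\ulambda',(\ulambda^R)^-}(v) = v^{R(\ulambda')}$, and the contribution is $v^{R(\ulambda')+N_{j_1}(\ulambda',\ulambda)}$. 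A direct reading of the definitions of $R$ and $N_{j_1}$ from the symbol gives the identity $R(\ulambda) = R(\ulambda') + N_{j_1}(\ulambda',\ulambda)$: the move from $\ulambda'$ to $\ulambda$ introduces precisely those column inversions of $\ulambda$ that were not already present in $\ulambda'$.

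The main obstacle is the combinatorial dominance lemma underlying (A): that the LLT-prescribed topmost placement of the moved entries translates, at the multipartition level, into $\umu \unrhd \ulambda^R$ for every valid $\ulambda'$. The natural route is a careful symbol-level analysis, exploiting that an exchange of two unequal entries within a single column of a symbol corresponds to a specific dominance-decreasing modification of the underlying multipartition; this is also the mechanism that will drive the exponent identity in (B).
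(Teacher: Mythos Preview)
Your proposal is correct and follows essentially the same route as the paper: a double induction on $n$ and on the dominance order, expanding $c_{\umu}=f_{j_1}^{(a_1)}b_{\umu^-}$, proving the result first for the coefficients $\widehat{d}_{\ulambda,\umu}(v)$ and then passing to $d_{\ulambda,\umu}(v)$ via the LLT correction terms. The two combinatorial ingredients you isolate --- the dominance claim ``$\umu^-\unrhd(\ulambda')^R$ and $\ulambda'\to\ulambda$ force $\umu\unrhd\ulambda^R$, with equality only if $(\ulambda')^R=\umu^-$'' and the exponent identity $R(\ulambda)=R(\ulambda')+N_{j_1}(\ulambda',\ulambda)$ --- are exactly the paper's Lemmas~\ref{l1} and~\ref{l2}.
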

The strategy for the proof of this theorem is modeled on the one presented in  \cite{F2}. However, the proofs of the preparatory results we need here are of course different 
 than the ones given in  \cite{F2}.   
The first  lemma is the analogue of \cite[Lemma 2.3]{F2}. 

\begin{lemma}\label{l1}
Let $\ulambda=(\lambda^0,\ldots,\lambda^{l-1})\vdash_l n $ and $\umu=(\mu^0,\ldots,\mu^{l-1})\vdash_l n$.  Assume that 
 $\umu$ is cylindric and that  $\umu^- \overset{j: k}{\longrightarrow} \umu $. 
Let  $\unu$ be a partition of $n-k$ such that $\unu \overset{j:k}{\longrightarrow} \ulambda$ and $\umu^- \unrhd \unu^R$.
   Then $\umu \unrhd \ulambda^R$ with equality only if $\umu^-=\unu^R$. 

\end{lemma}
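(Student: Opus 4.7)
The proof strategy parallels that of \cite[Lemma 2.3]{F2} for single partitions, now carried out with shifted symbols. Fix $h$ large enough so that all of $\mathfrak{B}(\umu^-), \mathfrak{B}(\umu), \mathfrak{B}(\unu), \mathfrak{B}(\ulambda), \mathfrak{B}(\unu^R), \mathfrak{B}(\ulambda^R)$ are defined. At the symbol level, the operation $\overset{j:k}{\longrightarrow}$ replaces $k$ entries equal to $j+h-1$ (one in each of $k$ distinct rows) by $j+h$, while regularization sorts each column in weakly decreasing order from top to bottom.

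The first step is to identify $\ulambda^R$ in terms of $\unu^R$. Since $\mathfrak{B}(\ulambda)$ and $\mathfrak{B}(\unu)$ share the same column multisets up to $k$ substitutions $j+h-1 \to j+h$ --- say $k_C$ substitutions in column $C$ --- and regularization preserves column multisets, $\mathfrak{B}(\ulambda^R)$ is obtained from $\mathfrak{B}(\unu^R)$ by replacing, in each column $C$, the topmost $k_C$ entries equal to $j+h-1$ by $j+h$. This determines a canonical row sequence $(\hat{c}_1, \ldots, \hat{c}_k)$ with $\unu^R \overset{j:k}{\underset{(\hat{c}_1, \ldots, \hat{c}_k)}{\longrightarrow}} \ulambda^R$.

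The heart of the proof is a partial-sum comparison. Writing $\Sigma_{c,m}(\ulambda) := \sum_{i<c}|\lambda^i| + \sum_{1 \le j \le m} \lambda^c_j$, the dominance $\ulambda \unrhd \ulambda'$ is the system $\Sigma_{c,m}(\ulambda) \ge \Sigma_{c,m}(\ulambda')$ for all $(c,m)$. Both $\umu^- \to \umu$ and $\unu^R \to \ulambda^R$ increase each $\Sigma_{c,m}$ by an integer in $\{0,1,\ldots,k\}$ equal to the number of row-changes that contribute to that partial sum. The cylindric condition on $\umu$, together with the construction of $\umu^-$ via the algorithm of \ref{reg}, pins the $k$ rows for $\umu^- \to \umu$ to the consecutive rows $c(\umu), \ldots, c(\umu)+r(\umu)-1$. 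A matching argument then pairs the $k$ row-changes on the $\unu^R$ side with those on the $\umu$ side so that the partial-sum contribution on the $\umu$ side, combined with the slack $\Sigma_{c,m}(\umu^-) - \Sigma_{c,m}(\unu^R) \ge 0$ granted by the hypothesis, dominates the contribution on the $\ulambda^R$ side for every $(c,m)$. This yields $\umu \unrhd \ulambda^R$. For the equality case $\umu = \ulambda^R$, every partial-sum inequality is saturated, so the matching consumes no slack, forcing each $\Sigma_{c,m}(\umu^-) - \Sigma_{c,m}(\unu^R) = 0$ and hence $\umu^- = \unu^R$.

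The main obstacle is the concrete construction of the matching: one needs a careful row-by-row analysis verifying that the $\umu$-side changes (constrained by the standardness of $\mathfrak{B}(\umu^-)$ and $\mathfrak{B}(\umu)$) can be paired with the $\unu^R$-side changes (constrained by the sorted-column structure of $\mathfrak{B}(\unu^R)$) in a way that preserves every dominance inequality. The cylindric structure of $\umu$ is precisely what makes this pairing feasible, as it places the $\umu$-side changes in the rows that maximise the early partial-sum contributions.
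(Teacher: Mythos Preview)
Your outline has the right shape but stops at the point where the actual work happens. You correctly identify that one must compare the $k$ row-changes $c(\umu),\ldots,c(\umu)+k-1$ on the $\umu$-side with the $k$ row-changes $\hat c_1,\ldots,\hat c_k$ on the $\ulambda^R$-side, and you are honest that ``the main obstacle is the concrete construction of the matching''. But you do not construct it, and nothing in your text explains why such a matching must exist. As written this is a proof plan, not a proof.

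The paper does not build a matching at all; it proves a sharper structural fact that makes the dominance comparison immediate. Since $\umu^- \unrhd \unu^R$ and $\mu^0=\cdots=\mu^{c(\umu)-1}=\emptyset$, one gets $(\nu^R)^0=\cdots=(\nu^R)^{c(\umu)-1}=\emptyset$. The key claim is then that every $\hat c_i \ge c(\umu)$. This is established by contradiction: if some $\hat c_i < c(\umu)$ then, because that row of $\unu^R$ is empty, the entry being changed must be $\mathfrak{B}(\unu^R)^{\hat c_i}_1 = s_{\hat c_i}+h-1 = j+h-1$, which pins $j=s_{\hat c_i}$; pushing this value into $\mathfrak{B}(\umu)^{c(\umu)}$ forces a violation of standardness, contradicting that $\umu$ is cylindric. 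Once all $\hat c_i \ge c(\umu)$, the $\umu$-side changes sit in the \emph{lowest} $k$ rows allowed, so their contribution to every partial sum $\Sigma_{c,m}$ dominates that of the $\ulambda^R$-side, and $\umu^- \unrhd \unu^R$ supplies the remaining slack. For the equality case $\umu=\ulambda^R$ one reads off $\{\hat c_1,\ldots,\hat c_k\}=\{c(\umu),\ldots,c(\umu)+k-1\}$ and hence $\umu^-=\unu^R$. This contradiction-with-standardness step is the concrete content your sketch is missing; without it, or an explicit matching replacing it, the argument is incomplete.
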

\begin{proof}
Assume that we have 
$$\mu^0=\ldots=\mu^{c (\umu)-1}=\emptyset \qquad \textrm{ and } \qquad \mu^{c (\umu)}\neq \emptyset.$$
Then, by construction, we have that 
$$\mathfrak{B}_{{\bf s},h} (\umu)^d=(\mathfrak{B}_{{\bf s},h} (\umu^-)^d \setminus \{j+h-1\}))\cup{\{j+h\}} $$
 for $d= c(\umu),\ldots, c(\umu)+k-1$. Similarly, by the definition of the regularization, we have that 
 $$\mathfrak{B}_{{\bf s},h} (\ulambda^R)^d=(\mathfrak{B}_{{\bf s},h} (\unu^R)^d \setminus \{j+h-1\}))\cup{\{j+h\}} $$
for elements $d\in \{ r_1,\ldots, r_k\}\subset \{0,\ldots,l-1\}$. As we have $\umu^- \unrhd \unu^R$, we deduce that:
$$(\nu^R)^0=\ldots=(\nu^R) ^{c (\umu)-1}=\emptyset.$$
We have to check  that 
  $r_i\geq c (\umu)$ for all $i\in \{1,\ldots, k\}$. So let us assume that we have $r_1< c(\umu)$ (without loss of generality). 
   Thus, we have 
   $$\mathfrak{B}_{{\bf s},h} (\unu^R)_1^{r_1}=0-1+s_{r_1}+h=j+h-1$$
  which implies that $j=s_{c_1}$. 
  Thus, we deduce that $j+h=s_{c_1}+h\in \mathfrak{B}_{{\bf s},h} (\umu)^{c(\umu)}$. As $j-1\notin 
    \mathfrak{B}_{{\bf s},h} (\umu)^{c(\umu)}$ by the construction of $\umu$, we deduce that 
     $i (\umu)>s_{c(\umu)}-s_{r_1}+1$. However this implies that 
     $ \mathfrak{B}_{{\bf s},h} (\umu)$ is not standard. Indeed, we have in this case 
     $ \mathfrak{B}_{{\bf s},h} (\umu)_{i(\umu)}^{c (\umu)}>  \mathfrak{B}_{{\bf s},h} (\umu)_{i(\umu)+s_{r_1}-s_{c(\umu)}}^{r_1} $. This a 
      contradiction because $\umu$ is cylindric.
     
     Assume now that we have the equality $\umu = \ulambda^R$, this implies that $ \{ r_1,\ldots, r_k\}=\{c(\umu),\ldots,c(\umu)+k-1\}$. Thus we also have 
     $\umu^-=\unu^R$ which concludes the proof.

%
%
%
%
%
%
%
%
%

\end{proof}

The second lemma is the analogue of \cite[Prop 2.5]{F2}. 

\begin{lemma}\label{l2}
Let $\ulambda$ be a cylindric multipartition and let $\umu$ be a multipartition such that
 $\umu^R=\ulambda$. Let  $\umu^-$ be a multipartition of $n-k$ such that $\umu^- \overset{j: k}{\longrightarrow} \umu $ for some $j\in \mathbb{Z}$ and $k\in \mathbb{N}$ 
  and such that  $(\umu^-)^R= \ulambda^-$ then 
    the coefficient of $\umu$ in $f_{j}^{(k)} \umu^-$ is $v^{R(\umu)-R(\umu^-)}$.
\end{lemma}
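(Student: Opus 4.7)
The strategy is to compute $N_j(\umu^-,\umu)$ and $R(\umu)-R(\umu^-)$ separately and show they coincide; the lemma then follows from the action formula $f_j^{(k)}\umu^- = \sum_{\umu^-\overset{j:k}{\longrightarrow}\unu} v^{N_j(\umu^-,\unu)}\unu$ of \ref{f1}.

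The key structural observation, which makes the computation tractable, is that all $k$ moves from $\umu^-$ to $\umu$ occur in a single ``column'' $i_0 := i(\ulambda)+s_{l-1}-s_{c(\ulambda)}$ of the shifted symbol. This is because regularization permutes entries within each column of the symbol, so the multiset of entries in any fixed column of $\mathfrak{B}_{\mathbf{s},h}(\umu)$ agrees with that of $\mathfrak{B}_{\mathbf{s},h}(\ulambda)$, and likewise for $\umu^-$ and $\ulambda^-$. Since the construction of $\ulambda^-$ from $\ulambda$ modifies entries only in column $i_0$ (replacing $k=r(\ulambda)$ copies of $\alpha:=j(\ulambda)-1$ by $\beta:=j(\ulambda)$, located at components $c(\ulambda),\ldots,c(\ulambda)+k-1$), the same must hold for $\umu^-\to\umu$. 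Thus $j+h=j(\ulambda)$, $k=r(\ulambda)$, and the move components $c_1,\ldots,c_k$ are exactly those $c$ whose row $\mathfrak{B}_{\mathbf{s},h}(\umu^-)^c$ contains $\alpha$ at subscript $i_0+s_c-s_{l-1}$ and does not contain $\beta$ anywhere.

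With the moves pinned down, $N_j(\umu^-,\umu)$ is computed directly. Reorder the moves so that $c_1<c_2<\cdots<c_k$ (the coefficient does not depend on the ordering, since the $c_i$ are distinct). Expanding the definition and using that $\alpha$ is removed from and $\beta$ added to each $\mathfrak{B}^{c_i}$ gives
$$N_j(\umu^-,\umu) = \sum_{i=1}^k \bigl[A(c_i)-B(c_i)-i\bigr],$$
where $A(c):=\#\{d\leq c:\alpha\in\mathfrak{B}_{\mathbf{s},h}(\umu^-)^d\}$ and $B(c):=\#\{d\leq c:\beta\in\mathfrak{B}_{\mathbf{s},h}(\umu^-)^d\}$.

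The main technical work is to compute $R(\umu)-R(\umu^-)$ and match it with this sum. I would decompose $\Delta R$ by the base position $(j_1,c_1)$ in the defining sum and analyze three cases: (i) $(j_1,c_1)$ is itself a move position, so the base value flips $\alpha\to\beta$; (ii) $(j_1,c_1)$ lies in column $i_0$ at a non-move component, so only the comparison entries at the higher move components change; (iii) $(j_1,c_1)$ lies outside column $i_0$, in which case the base entry is unchanged and the only way $R(\ulambda)_{(j_1,c_1)}$ can shift is through the ``$\notin\mathfrak{B}^{c'}$'' condition, which toggles precisely when $\mathfrak{B}^{c_1}_{j_1}\in\{\alpha,\beta\}$ and $c'\in\{c_1,\ldots,c_k\}$ lies above $c_1$. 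In each case the strict monotonicity of $\mathfrak{B}^c$ and the cylindricity of $\ulambda,\ulambda^-$ let one rewrite the changes as explicit counts of components. The hardest step is the bookkeeping in case (iii): one must match the net $\alpha$-base contributions against the terms of $\sum_i A(c_i)$ and the net $\beta$-base contributions against $\sum_i B(c_i)$, with the $-i$ in each term of $N_j$ absorbed by case (i). Once this combinatorial identity is verified, the lemma follows.
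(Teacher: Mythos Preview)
Your approach is essentially the same as the paper's: both reduce to showing $N_j(\umu^-,\umu)=R(\umu)-R(\umu^-)$ and then compare the two quantities position by position, invoking the regularisation hypothesis $(\umu^-)^R=\ulambda^-$ and the construction of $\ulambda^-$ to control the comparison conditions. Your single-column observation (all $k$ moves lie in column $i_0$) is correct and is a clean way to organise the computation; the paper does not state it explicitly but its arguments implicitly rely on the same structure, since the comparison entry in the definition of $R(\umu)_{(j_1,c_1)}$ always sits in the same column as the base entry. Where the paper decomposes by the \emph{value} of the base entry (equal to $j+h-1$, to $j+h$, or to neither), you decompose by \emph{column} (move position, non-move in $i_0$, outside $i_0$); these are two slicings of the same finite computation.

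One point in your sketch is misattributed. You write that the correction ``$-i$'' in $N_j=\sum_i[A(c_i)-B(c_i)-i]$ is ``absorbed by case (i)''. In fact the paper shows that at a move position the quantity $R(\umu)_{(i,c)}-R(\umu^-)_{(i,c)}$ vanishes, so case (i) contributes nothing to $\Delta R$. The $-\sum_i i=-\binom{k+1}{2}$ is instead exactly what cancels the overcount in $\sum_i A(c_i)$ coming from the move components themselves: since $c_1<\cdots<c_k$ and each $c_s$ has $\alpha$ in its row of $\umu^-$, the move components contribute $\sum_{s=1}^k(k-s+1)=\binom{k+1}{2}$ to $\sum_i A(c_i)$, leaving precisely the non-move $\alpha$-rows, which the paper matches with case 2a. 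The rest of your outline (cases (ii), (iii), and the need to check the strict inequality $\mathfrak{B}(\umu^-)^{c_t}_{i+s_{c_t}-s_c}>\mathfrak{B}(\umu^-)^c_i$ via the regularisation process) is exactly what the paper carries out.
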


\begin{proof}
Consider a shifted symbol $\mathfrak{B}_{({\bf s},h)}( \umu^-) $.  Let 
 $(i_1, c_1)$, \ldots,  $(i_m, c_m)$  be all the elements of  $\mathbb{N}\times \{0,1,\ldots,l-1\}$ such that 
 $$\mathfrak{B}_{({\bf s},h)}( \umu^-)_{i_1}^{c_1}=\ldots= \mathfrak{B}_{({\bf s},h)}( \umu^-)_{i_m}^{c_m}=j+h-1.$$
 By hypothesis, we have $m\geq k$ and one can assume without loss of generality that:
 $$\mathfrak{B}_{({\bf s},h)}( \umu)_i^c=\left\{
 \begin{array}{ll}
j+h & \textrm{ if $(i_t,c_t)=(i,c)$ for one $t\in \{1,\ldots,k\}$}\\
\mathfrak{B}_{({\bf s},h)}( \umu^-)_i^c & \textrm{ otherwise}
 \end{array}\right.$$  
%
%
%
%
%
 First, we easily see that for all $(i,c)$ such that $\mathfrak{B}_{({\bf s},h)}( \umu^-)_i^c \notin \{j+h-1,j+h\} $ we have 
  $$R(\umu^-)_{(i,c)}= R(\umu)_{(i,c)}.$$
  Let now $(i,c)$ be such that 
   $$\mathfrak{B}_{({\bf s},h)}( \umu^-)_{i}^{c}=j+h-1.$$
   \begin{itemize}
   \item Assume that $c\neq c_t$ for all $t\in \{1,\ldots,k\}$. Then we have $\mathfrak{B}_{({\bf s},h)}( \umu)_{i}^{c}=j+h-1$. 
    We claim that we have:
  $$R(\umu)_{(i,c)}-R(\umu^-)_{(i,c)}=
   \{\textrm{number of integers $t\in \{1,\ldots, k\}$  such that  ${c}< {c}_t$}\}.  $$
  We need to show that if ${c}< {c}_t$ then we have 
  $$\mathfrak{B}(\umu^-)_{i+s_{c_t}-s_{c}}^{c_t}> \mathfrak{B} (\umu^-)_{i}^{c} .$$
  Assume to the contrary that $\mathfrak{B}(\umu^-)_{i+s_{c_t}-s_{c}}^{c_t}\leq \mathfrak{B}(\umu^-)_{i}^{c} , $
  then by the process of regularization, in the construction of  $\mathfrak{B}_{({\bf s},h)}( \ulambda^-)$, the 
   number $\mathfrak{B}_{({\bf s},h)}( \umu^-)_{i_t}^{c_t}=j+h-1$ is send to a row $c'$ of the symbol
    which is greater that the row containing $\mathfrak{B}_{({\bf s},h)}( \umu^-)_{i}^{c}=j+h-1$. 
     This is impossible by the construction of $\ulambda^-$.  
 \item Assume that $c=c_t$ for $t\in \{1,\ldots,k\}$. Then we have $\mathfrak{B}_{({\bf s},h)}( \umu)_{i}^{c}=j+h$. 
  We claim that we have 
     $$R(\umu)_{(i,c)}=R(\umu^-)_{(i,c)}.$$
     This comes from the fact that if there exist $c'>c$ such that $c'\neq c_s$ for $s\in \{1,\ldots,k\}$ and 
      $$\mathfrak{B}(\umu^-)_{i+s_{c'}-s_{c'}}^{c'}> \mathfrak{B} (\umu^-)_{i}^{c},$$
      then $j+h-1\notin \mathfrak{B}_{({\bf s},h)}( \umu^-)^{c'}$ which follows from the construction of $\ulambda^-$. 
   \end{itemize}
    Now assume that  $(i,c)$ is such that 
   $$\mathfrak{B}_{({\bf s},h)}( \umu^-)_{i}^{c}=j+h.$$
We follow the same kind of reasoning as above by showing that if 
$c_t>c$ then we have:
 $$\mathfrak{B}(\umu^-)_{i+s_{c_t}-s_{c_t}}^{c_t}>  \mathfrak{B} (\umu^-)_{i}^{c}.$$
 We deduce that:
  $$R(\umu^-)_{(i,c)}-R(\umu)_{(i,c)}=
   \{\textrm{number of integers $t\in \{1,\ldots, k\}$  such that  ${c}< {c}_t$}\}  $$
   Now, we use the formula in \S \ref{f1}  to deduce the result, we have:
   $$
   \begin{array}{rcl}
   N_j (\umu^-,\umu)&=&\displaystyle\sum_{1\leq i\leq k} \left(\begin{array}{l} 
   \{\textrm{number of integers equals to $j+h-1$ in $\mathfrak{B}_{({\bf s},h)} (\umu)^c$ with $c\leq c_i$}\}   \\
   -  \{\textrm{number of integers equals to $j+h$ in $\mathfrak{B}_{({\bf s},h)} (\umu^-)^c$ with $c\leq c_i$}\}
   \end{array}\right)\\
   &=&\displaystyle\sum_{ c\in \{0,\ldots,l-1\}  } 
   \{\textrm{number of integers $t\in \{1,\ldots, k\}$  s.t. ${c}< {c}_t$ and  $j+h-1\in \mathfrak{B}_{({\bf s},h)}( \umu)^{c}$    }\}   \\
   && -\displaystyle\sum_{ c\in \{0,\ldots,l-1\}  }   \{\textrm{number of integers $t\in \{1,\ldots, k\}$  s.t.  ${c}< {c}_t$ and  $j+h\in \mathfrak{B}_{({\bf s},h)}( \umu^-)^{c}$    }\} 
   \\
   &=& R (\umu)-R(\umu^-).
   \end{array}
   $$

\end{proof}
\begin{Rem}
 In \cite{LM}, B. Leclerc and H. Miyachi have given an explicit closed formula for the elements of the canonical bases for irreducible highest weight modules of level 
  $l=2$. This formula also uses the notion of symbols. When $l>2$, these element are more difficult to compute because their are non monomial in terms of the Chevalley operators 
   (contrary to the case $l=2$). It is easy to check that our formula for the above decomposition numbers are consistent with the ones of Leclerc-Myiachi in this particularly case.

\end{Rem}

\begin{abs}{\bf Proof of Theorem \ref{princ}.} The proof is exactly the same as in \cite[Thm 2.2]{F2}. We give it for the convenience of the reader.   We argue  induction on $n\in \mathbb{N}$ and on the dominance order. When $n=0$  or  when $\umu$ is minimal with respect to $\unrhd$, there is nothing to do. 
 So let us assume that $n>0$ and that we have a cylindric multipartition $\umu$ of rank $n$. By induction, the result holds when $\umu$ is replaced with $\umu^-$. 
Assume that we have  $\umu^- \overset{j: k}{\longrightarrow} \umu $ for $j\in \mathbb{Z}$ and $k\in \mathbb{N}$. Then we have:
$$f_j^{(k)} b_{\umu^-}=\umu+ \sum_{\umu\rhd \ulambda} \widehat{d}_{\ulambda,\umu} (v) \ulambda.$$
We will prove the property of the theorem for the numbers $\widehat{d}_{\ulambda,\umu} (v)$ and then deduce the result for the numbers ${d}_{\ulambda,\umu} (v)$. 
 Assume that $\widehat{d}_{\ulambda,\umu} (v)\neq 0$. Then there exists $\unu\vdash_{l} n-k$ such that $d_{\unu,\umu^-} (v)\neq 0$ and such that
  $\unu \overset{j: k}{\longrightarrow} \ulambda $. By induction, this implies that $\umu^-\unrhd \unu^R$.  We can then apply Lemma \ref{l1} which implies that 
  $\umu \unrhd \ulambda^R$. 
  
  Now let us assume that $\ulambda^R=\umu$. If $\unu\vdash_l n$  is such that $d_{\unu,\umu^-}(v)\neq 0$ and 
  $\unu \overset{j: k}{\longrightarrow} \ulambda $ then by induction, we have $\unu^R \unrhd \umu^-$. Then we obtain 
   $\unu^R=\umu^-$.  Thus, the coefficient $\widehat{d}_{\ulambda,\umu} (v)$ equals $\widehat{d}_{\unu,\unu^R} (v)$ 
 times the coefficient of $\ulambda$ in $f_j^{(m)} \unu$. By Lemma \ref{l2} and by induction,  this coefficient is $v^{R(\ulambda)}$.
  Hence, we have shown that $\widehat{d}_{\ulambda,\umu} (v)$ is zero unless  $\umu \unrhd \ulambda^R$ and 
   $\widehat{d}_{\ulambda,\ulambda^R} (v)= v^{R(\ulambda)}$.  The LLT algorithm in \S \ref{llt} implies that 
   $$d_{\ulambda,\umu} (v)=\widehat{d}_{\ulambda,\umu} (v)+\sum_{\xi \lhd \umu} \alpha_{\xi,\umu} (v) d_{\ulambda,\xi} (v)$$
   Now assume that $\umu$ is not greater than  $\ulambda^R$ with respect to the dominance order. This implies that for all 
    $\xi\vdash_l n$ such that $\xi \lhd \umu$, $\xi$ is not greater than $\ulambda$. By induction, we have 
     $d_{\lambda,\xi} (v)=0$. Thus we obtain $d_{\ulambda,\umu} (v)=\widehat{d}_{\ulambda,\umu} (v)$ and the result follows. This concludes the proof.

\end{abs}

\section{Regularization for Ariki-Koike algebras}
We now present consequences on the representation theory of Ariki-Koike algebras. 

\begin{abs}
Let  ${\bf s}=(s_0,\ldots,s_{l-1})\in \mathcal{S}^l$ and let  $\eta\in \mathbb{C}$.  We consider the associative $\mathbb{C}$-algebra  $\mathcal{H} (\bs)$  generated by $T_{0},\cdots ,T_{n-1}$ subject to the relations $(T_{0}-\eta ^{%
{s}_{0}})...(T_{0}-\eta ^{{s}_{l -1}})=0$, $(T_{i}-\eta
)(T_{i}+1)=0$, for $1\leq i\leq n$ and the type $B$ braid relations 
\begin{gather*}
(T_{0}T_{1})^{2}=(T_{1}T_{0})^{2},\quad T_{i}T_{i+1}T_{i}=T_{i+1}T_{i}T_{i+1}%
\text{ }(1\leq i<n), \\
T_{i}T_{j}=T_{j}T_{i}\text{ }(j\geq i+2).
\end{gather*}
$\mathcal{H} (\bs)$ is called {\it the Ariki-Koike algebra}.  Set $e=+\infty$ if $\eta$ is not a root of $1$. Otherwise, $e$ is the order of $\eta$ as a root of $1$. We assume that 
 $e\neq 1$. 
$\mathcal{H} (\bs)$   is non semisimple in general and 
 its representation theory is usually studied through its decomposition matrix which we now define. 
By the works of Dipper, James and Mathas \cite{DJM}, one can construct 
 a certain set of finite dimensional $\mathcal{H} (\bs)$-modules called {\it Specht modules}:
 $$\left\{ S^{\ulambda} \ |\ \ulambda\vdash_l n\right\}.$$
 The simple $\mathcal{H} (\bs)$-modules  are indexed by a certain set of multipartitions called FLOTW multipartitions $\Uglov{\bs,e} (n)$ (see  \cite[\S 5.7]{GJ})
  $$\left\{ D^{\umu} \ |\ \umu\in\Uglov{\bs,e} (n) \right\}.$$
  If  $e=\infty$, we have $\Uglov{\bs,e} (n)=\Uglov{\bs} (n)$, the set of cylindric multipartitions.
\end{abs}
\begin{abs} 
%
%
%
%
%

If $\ulambda\vdash_l n$  then one can consider the composition multiplicities 
   $[S^{\ulambda}:D^{\umu}]$, with $\umu\in \Uglov{\bs,e} (n)   $. 
    The resulting matrix: 
    $$\mathcal{D}:=([S^{\ulambda}:D^{\umu}])_{\ulambda\vdash_l n,\umu\in \Uglov{\bs,e} (n) } ,  $$
is called the decomposition matrix of $\mathcal{H} (\bs)$. 
The problem of computing the decomposition matrix has been solved by Ariki \cite{ariki} by proving a generalization of a conjecture by Lascoux, Leclerc and Thibon.
 The theorem asserts that the decomposition numbers $[S^{\ulambda}:D^{\umu}]$ with $\umu\in \Uglov{\bs,e} (n)   $ corresponds to the coefficients of the canonical bases for an irreducible highest weight $\Uslaff$-module (realized as submodules of the Fock space) evaluated at $v=1$. 
\end{abs}
In the case $e=+\infty$, this decomposition numbers are thus the polynomials $d_{\ulambda,\umu} (v)$  evaluated at $v=1$. We obtain the following result.
\begin{Th}\label{princ1}
Assume that $e=+\infty$ and that $\ulambda$ and $\umu$ are $l$-partitions of rank $n$ and assume that $\umu$ is cylindric. Then we have:
  $[S^{\ulambda}:D^{\umu}]=0$ unless $\umu\unrhd  \ulambda^R$ 
 while $[S^{{\ulambda}}: D^{\ulambda^R} ]=1$.
\end{Th}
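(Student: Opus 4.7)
The plan is to derive Theorem \ref{princ1} as a direct consequence of Theorem \ref{princ} together with Ariki's theorem in the form recalled at the start of this section. The key observation is that the case $e=+\infty$ is precisely the case where the relevant quantum algebra governing decomposition numbers is $\Usli$ rather than $\Uslaff$, so the canonical basis computed in the previous section is exactly the one Ariki's theorem identifies with the decomposition matrix.

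More concretely, I would first recall that when $e=+\infty$, the indexing set $\Uglov{\bs,e}(n)$ of simple $\mathcal{H}(\bs)$-modules coincides with the set $\Uglov{\bs}(n)$ of cylindric multipartitions, so the hypothesis ``$\umu$ is cylindric'' in the theorem statement matches the range in which $D^{\umu}$ is defined. Then I would invoke Ariki's theorem (as stated in the preceding paragraph of the paper) to obtain the identity
\[
[S^{\ulambda}:D^{\umu}] = d_{\ulambda,\umu}(1),
\]
valid for all $\ulambda\vdash_l n$ and all $\umu\in \Uglov{\bs}(n)$.

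With this translation in hand, the two assertions of Theorem \ref{princ1} follow at once from Theorem \ref{princ}. For the vanishing statement, Theorem \ref{princ} yields $d_{\ulambda,\umu}(v)=0$ whenever $\umu\not\unrhd \ulambda^R$, and a polynomial which is identically zero evaluates to zero at $v=1$. For the normalization statement, Theorem \ref{princ} gives $d_{\ulambda,\ulambda^R}(v)=v^{R(\ulambda)}$, whose value at $v=1$ is $1$, regardless of the exponent $R(\ulambda)$.

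Since all substantive combinatorial work has already been carried out in the proof of Theorem \ref{princ} (via Lemmas \ref{l1} and \ref{l2}), there is essentially no obstacle here; the only subtlety worth flagging is to verify that the hypothesis $e\neq 1$ imposed when defining $\mathcal{H}(\bs)$ is compatible with $e=+\infty$, which it is by definition ($e=+\infty$ corresponds to $\eta$ not being a root of unity), and that Ariki's theorem in the form of \cite{ariki} indeed applies in this generic parameter setting with the Fock space $\mathcal{F}^{\bs}$ and the submodule $V(\bs)$ used in Section 4.
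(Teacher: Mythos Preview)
Your proposal is correct and is essentially the same argument the paper gives: the paper simply remarks, just before the theorem, that in the case $e=+\infty$ the decomposition numbers $[S^{\ulambda}:D^{\umu}]$ are the polynomials $d_{\ulambda,\umu}(v)$ evaluated at $v=1$ by Ariki's theorem, and then Theorem \ref{princ1} is stated as an immediate consequence of Theorem \ref{princ}. There is no additional argument in the paper beyond what you have written.
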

By results of several authors, the polynomials  $d_{\ulambda,\umu} (v)$ also have an interpretation in terms of the representation theory of Ariki-Koike algebras. 
 They correspond to graded decomposition numbers (see \cite{Kl}). Thus Theorem \ref{princ} can be interpreted as a graded analogue of the above regularization Theorem.

\begin{abs}
It is natural to ask what happen in the case where $e\in \mathbb{N}$. Here the main problem is to find 
 a natural order on the set of multipartitions which is the analogue of the dominance order on partitions. A natural choice for it is the one used in \cite{Algo}.
   Now,  the decomposition matrices 
   for Ariki-Koike algebras  can be computed using the algorithm described in this paper and implemented in \cite{Algoprog}. 
   Let $e=2$, $l=2$ , ${\bf s}=(0,1)$ and $n=6$ then for the $2$-partitions   $\ulambda=(3,3)$, 
    $\umu^1=(4,2)$ and $\umu^2=(2,4)$, we have $[S^{\ulambda} : D^{\umu^1}] =[S^{\ulambda} : D^{\umu^2}]=1$ 
     and there are no partition $\unu$ such that $[S^{\ulambda} : D^{\unu}]\neq 0$  which are less than 
      $\umu^1$ and $\umu^2$.  
   Thus,  an analogue of Theorem \ref{princ1}  is not 
    available for these choices. 
\end{abs}

\vspace{1cm}
\noindent {\bf Address}\\
\noindent \textsc{Nicolas Jacon}, UFR Sciences et Techniques,
16 Route de Gray,
25030 Besan\c con
FRANCE\\  \emph{njacon@univ-fcomte.fr}

\end{document}